%

\documentclass{article}
\synctex=1

\usepackage{authblk}

\usepackage{amsmath}
\usepackage{amsthm}
\usepackage{amsfonts}
\usepackage[all]{xy}
\usepackage{url}

\usepackage{graphicx}

\usepackage{color}
\usepackage{xcolor}

\DeclareMathSymbol{\rightrightarrows}  {\mathrel}{AMSa}{"13}

\def\sq{\operatorname{sq}}

\catcode`\@=11
\def\varholim@#1#2{\mathop{\vtop{\ialign{##\crcr
 \hfil$#1\m@th\operator@font holim$\hfil\crcr
 \noalign{\nointerlineskip\kern\ex@}#2#1\crcr
 \noalign{\nointerlineskip\kern-\ex@}\crcr}}}}
\def\hocolim{\mathpalette\varholim@\rightarrowfill@} 
\def\hoinvlim{\mathpalette\varholim@\leftarrowfill@}
\catcode`\@=\active

\newtheorem{theorem}{Theorem}
\newtheorem{lemma}[theorem]{Lemma}
\newtheorem{corollary}[theorem]{Corollary}

\newtheorem{proposition}[theorem]{Proposition}

\theoremstyle{definition}

\newtheorem{ex}[theorem]{Example}

\newtheorem{rem}[theorem]{Remark}



\begin{document}

\title{\bf Stable components and layers}
\author{J.F. Jardine\thanks{Supported by NSERC.}}

\affil{\small Department of Mathematics\\University of Western Ontario
}
\affil{jardine@uwo.ca}


\maketitle

\begin{abstract}
\noindent
Component graphs $\Gamma_{0}(F)$  are defined for arrays of sets $F$, and in particular for arrays of path components for Vietoris-Rips complexes and Lesnick complexes. The path components of
$\Gamma_{0}(F)$ are the {\it stable components} of the array $F$.
The stable components for the system of Lesnick complexes $\{ L_{s,k}(X) \}$ for a finite data set $X$ decompose into {\it layers}, which are themselves path components of a graph. Combinatorial scoring functions are defined for layers and stable components.
\end{abstract}

{\bf Keywords}:\ clusters, graphs, stable components, layers
\smallskip

{\bf Subject Classifications}:\ 55U10, 68R10, 62H30

\section*{Introduction}

Astronomers say that a cluster is a ``group of stars or galaxies forming a relatively close association''.
\begin{center}
\includegraphics[width=100pt]{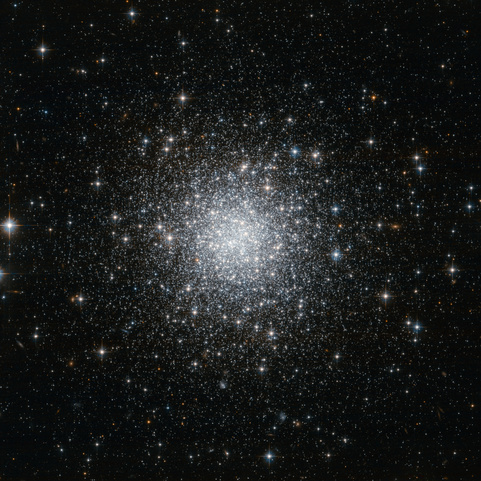}
\end{center}
Clusters are distinguished by relative density: they are concentrated collections of objects, surrounded by voids.

In the late 1700s, the brother and sister team William and Caroline Herschel found and classified ``stellar over-densities'' by counting stars in grids of regions of space. The method used today follows the same principle, although it is done with sophisticated imaging equipment coupled with computer analysis that filters out light artifacts.
Relative to big picture items such as the cosmic microwave background, clusters are anomalies --- they are small, dense collections of stellar objects.

The same sort of big picture/little picture dichotomy is present for other large data sets. In financial data, a dense, relatively small collection of rapid low-value transactions could point to an instance of money laundering, while a large scale analysis can detect sectoral or global market fluctuations.
\medskip

Colloquially, clusters are collections of data points in relative close proximity, within some space that is defined by a finite list of parameters.
We shall assume, more precisely, that a data set (or a data cloud) $X$ is a finite collection of points inside real vector space $\mathbb{R}^{n}$. The motivating idea behind clustering is to find regions in the ambient space that contain dense populations of elements of the data set $X$. This is usually done in data analysis by applying partitioning methods to the data set $X$, or by studying hierarchies of such partitions.

The definitional structure and methods of this paper represent a departure from traditional clustering, although partitions and hierarchies (really, trees) of partitions are both used. The central objects of study are partition elements (or path components, or clusters) that {\it persist} through changes of distance parameters, or density parameters, or both --- these objects are called {\it stable components}. The idea
is to assign a more precise meaning to the colloquial version of clusters, such as clusters of stars, which are isolated groupings of objects in a data set. 

To this end, we recall some basic constructions of topological data analysis in the first section of this paper. Specifically, the data set $X$ determines an ascending sequence of simplicial complexes $V_{s}(X)$, the Vietoris-Rips complexes, in which the simplices consist of sets of data points having mutual distance bouned by a non-negative real number $s$. These complexes, in turn, are filtered by Lesnick subcomplexes $L_{s,k}(X)$, where the filtration is determined by a density parameter $k$.

Each of these complexes has a functorial set of path components $\pi_{0}V_{s}(X)$, respectively $\pi_{0}L_{s,k}(X)$, which partition subsets of the data set $X$. The resulting diagrams of partitions have associated hierarchies $\Gamma(V_{\ast}(X))$ and $\Gamma(L_{\ast,\ast}(X))$, containing {\it component graphs} $\Gamma_{0}(\pi_{0}V_{\ast}(X))$ and $\Gamma_{0}(L_{\ast,\ast}(X))$, respectively. The component graphs consist of path components that do not change through some variation in the defining parameters $s$ and/or $k$. The component graphs themselves have path components, and these are the {\it stable components} for the data set $X$, in various incarnations.
\medskip

Stable components further decompose into {\it layers,} for filtrations derived from the Lesnick filtration $L_{s,k}(X)$, in which the underlying sets of vertices may not be constant. The layers of the filtration $L_{\ast,\ast}(X)$  are defined by computing path components of the {\it layer subgraph} $\Gamma_{0}'(L_{s,k}(X))$ of the component graph. The edges of the layer subgraph are defined by path components that have the same {\it size} (or cardinality) through changes of defining parameters. 

Layers are defined and discussed in the second section of this paper. Every stable component is a disjoint union of its constituent layers, while the layers form graphs that are relatively easy to visualize geometrically as unions of squares.
\medskip

Naive calculations of stable components and layers produce large collections of small objects. In particular, the individual elements of $X$ are stable components for the Vietoris-Rips filtration. It is typical to interpret small stable components or layers as noise, and remove them from the output of a particular algorithm. This can be done with a ``scoring'' technique, for which noise objects can be interpreted as stable components having low scores. That said, one could be most interested in stable components having relatively low scores, such as in algorithms that detect money laundering or smaller scale star clusters.

Scoring appears as an analytic device in statistical approaches to clustering --- see \cite{HMc}, for example. An alternative combinatorial method of scoring is presented in the third section of this paper. Basically, the score $\sigma(P)$ of a stable component or the score $\sigma(L)$ of a layer $L$ is the sum of the cardinalities of the path components that appear in its list of vertices. This number is most effectively calculated by making yet another graph out of the vertices of the Lesnick complexes $L_{s,k}(X)$ and computing the cardinality of the set of vertices of a pullback of a stable component or layer within this new graph. This method of scoring is additive, so that the score of a stable component is the sum of the scores of its constituent layers.
\medskip

An simple theoretical problem is presented in the final section of this paper. We start with a data set $X \subset \mathbb{R}^{n}$ and add a point $y$ which is very close to one of the elements of $X$ to form a new data set $Y = X \sqcup \{y\}$, and then we want to compare the stable components for $X$ and $Y$.

Suitably interpreted, the inclusion $ i: X \subset Y$ induces maps of Vietoris-Rips complexes $V_{s}(X) \to V_{s}(Y)$ which are natural with respect to the distance parameter $s$, with a corresponding map of hierarchies
\begin{equation*}
  \Gamma(X) := \Gamma(V_{\ast}(X)) \xrightarrow{i_{\ast}} \Gamma(V_{\ast}(Y)) =: \Gamma(Y).
\end{equation*}

The language of layers is used to compare stable components with respect to this map. In this context, a layer of $X$ can be viewed as an edge
\begin{equation*}
  (s,[x]_{s}) \to (t,[x]_{t})
\end{equation*}
  of $\Gamma(X)$, such that $[x]_{s}=[x]_{t}$ as subsets of $X$ and a maximality condition on the length $t-s$ is satisfied. Here, $[x]_{s}$ is the path component of $x$ in $V_{s}(X)$.

The map $\Gamma(X) \to \Gamma(Y)$ breaks up layers of $X$ having both $y \notin [x]_{s}$ and $y \in [x]_{t}$ in $V_{s}(Y)$ and $V_{t}(Y)$, respectively. Otherwise, layers of $X$ are mapped to ``partial'' layers of $Y$. These partial layers expand to layers of $Y$, having a size that can be approximated. The situation is summarized in Proposition \ref{prop 15} below.

More precise answers to the question of how layers and their scores vary through the hierarchy comparison $i_{\ast}: \Gamma(X) \to \Gamma(Y)$ would be available in specific examples.

The hierarchy $\Gamma(Y)$ is a refinement of $\Gamma(X)$, and the map $i_{\ast}$  is a deformation retract in a very strong sense by the homotopy interleaving methods of \cite{BlumLes}. Statements of this form give coarse information about layers, while the precise layer structure of a data set depends strongly on the distances between its points. Adding a single point to a data set $X$ can introduce many new distances between points of the resulting data set $Y$.
\medskip

This paper was partially conceived and written during a series of visits to the Tutte Institute, and I would like to thank the Insitute for its hospitality and support. This research was also partially supported by the Natural Sciences and Engineering Research Council of Canada.

I would like to thank the referee for raising the question that is the subject of the final section of this paper. This question is quite natural, and the collection of ideas behind the solution that is presented here has proved to be quite provocative.
\medskip

\tableofcontents


\section{Clusters, graphs and stable components}

Clustering is a long-standing enterprise of data analysis, which consists of many definitions and methods. Basically, the standard techniques amount to either partitioning a data set, or constructing hierarchies of partitions.

For example, $K$-means clustering starts with a set of points in a data set. This set of points partitions the data set into regions of nearest neighbours, or Voronoi cells. The algorithm proceeds by finding centres of the cells, and then finding nearest neighbour sets for this set of centres to produce a new set of regions. The procedure stops when the set of centres stabilizes. The Voronoi cells determined by the resulting set of stable points partition the data set, and the partitions are expected to contain dense regions of data points near their centres.

Hierarchical clustering algorithms, such as single linkage clustering, require fewer assumptions. The inductive step in single linkage clustering assumes the existence of a partition
\begin{equation*}
  X = P_{1} \sqcup \dots \sqcup P_{k} \subset \mathbb{R}^{n}
\end{equation*}
of the data set $X$. Find subsets $P_{i}$ and $P_{j}$ which are closest together in the ambient space $\mathbb{R}^{n}$, and then form a coarser partition by taking the union of $Q=P_{i} \cup P_{j}$ while keeping the other partition subsets fixed. There is a corresponding function which relates the first partition to the new one. The algorithm typically starts with the discrete partition $X = \sqcup_{x \in X}\ \{x \}$, and the last possible step in the resulting hierarchy of partitions would be the singleton partition consisting of $X$ alone. The algorithm is typically stopped when it reaches a partition of $X$ with sufficiently many points in some partition members, where the phrase ``sufficiently many'' is open to interpretation. The diagram of partitions and relations between them forms a dendogram, which is a type of tree. 
\medskip

The methods of topological data analysis produce multiple variations of these general themes. 

Suppose that $X \subset \mathbb{R}^{n}$ is a finite data set, and choose a non-negative real number $s$.

The {\it Vietoris-Rips} complex $V_{s}(X)$ is a simplicial complex with simplices consisting of sets $\{x_{0}, \dots ,x_{k}\}$ of points of $X$ such that $d(x_{i},x_{j}) \leq s$ for all $i,j$. The vertices of $V_{s}(X)$ are the elements of $X$, and the set of path components $\pi_{0}V_{s}(X)$ of $V_{s}(X)$ defines a partition of the data set $X$.

Explicitly, elements $x,y \in X$ are in the same path component of $V_{s}(X)$ if there is a series of ``short hops'' (length $\leq s$) from $x$ to $y$ through elements of $X$. We define an equivalence relation on $X$ in this way, and the equivalence classes are the path components of $V_{s}(X)$. Calculation of the set of path components of $V_{s}(X)$ can be done with an algorithm.

If $s \leq  t$ then there is an induced inclusion of simplicial complexes
\begin{equation*}
  V_{s}(X) \subset V_{t}(X),
\end{equation*}
  and a corresponding function $\pi_{0}V_{s}(X) \to \pi_{0}V_{t}(X)$ that relates partitionings given by the respective sets of path components. Because $X$ is finite, there are only finitely many numbers
\begin{equation*}
  0=s_{0} < s_{1} < \dots < s_{p}
\end{equation*}
that I call {\it phase change} numbers, which can occur as distances between elements of $X$. In the corresponding string of inclusions
\begin{equation*}
  X = V_{s_{0}}(X) \subset V_{s_{1}}(X) \subset \dots \subset V_{s_{p}}(X),
\end{equation*}
the complex $V_{s_{0}}(X)$ is the discrete set $X$, while $V_{s_{p}}(X)$ is a big simplex, which I write as $\Delta^{X} =: \Delta^{N}$, where $N+1$ is the number of elements of $X$. The space $V_{s_{p}}(X) = \Delta^{X}$ is contractible.

The contractibility of $V_{s_{p}}(X)$ implies that its set of path components $\pi_{0}V_{s_{p}}(X)$ is a singleton set, and I express this by writing $\pi_{0}V_{s_{p}}(X) = \ast$.

\begin{rem}
One way to produce the complexes $V_{s_{i}}(X)$, altogether, is to first find all distances $s_{i}$ between data points, and to determine the maximum distance between elements for all subsets $\sigma = \{x_{0},x_{1}, \dots ,x_{p}\}$ of $X$. This maximum distance for a subset $\sigma$ is one of the $s_{i}$, and so $\sigma$ is a simplex of $V_{s_{i}}(X)$. This construction is simple enough, but note the exponential complexity \cite{Zomo}.
\end{rem}

The corresponding picture
\begin{equation}\label {eq 1}
  X = \pi_{0}V_{s_{0}}(X) \to \pi_{0}V_{s_{1}}(X) \to \dots \to \pi_{0}V_{s_{p}}(X) = \ast
\end{equation}
of surjective functions between partitions defines a tree by a method specified below (Remark \ref{rem 4}), and as such defines a {\it hierarchical clustering}.

The Vietoris-Rips complex $V_{s}(X)$ can be filtered by density. Suppose that $k$ is a non-negative number. Then the complex $V_{s}(X)$ has a ``full'' subcomplex $L_{s,k}(X)$, which I call a {\it Lesnick complex}, whose simplices consist of vertices having at least $k$ neighbours relative to the parameter $s$. See also \cite{HMc}.

The Lesnick complexes $L_{s,k}(X)$, $k \ge 0$ filter the Vietoris-Rips complex $V_{s}(X)$, and changing either the distance parameter $s$ or the density parameter $k$ defines an array of inclusions
\begin{equation}\label{eq 2}
  \xymatrix@=12pt{
    L_{s,k}(X) \ar[r] & L_{t,k}(X) \\
    L_{s,k+1}(X) \ar[u] \ar[r] & L_{t,k+1}(X) \ar[u]
  }
\end{equation}
of simplicial complexes.

Lesnick says that the array $\{ L_{s,k}(X) \}$ is the degree Rips filtration of the Vietoris-Rips system $\{ V_{s}(X) \}$, \cite{LW2}.

\begin{ex}
The partitioning algorithm DBSCAN* (``Density based spatial clustering of applicatons with noise'') amounts to a calculation of $\pi_{0}L_{s,k}(X)$, for fixed tunable distance parameter $s$ and density parameter $k$. 
\end{ex}

\begin{ex}\label{ex 3}
The ``hierarchical'' version HDBSCAN* \cite{DBLP}, \cite{HMc} of DBSCAN* is the production (and interpretation) of a tree that is associated to the string of functions
\begin{equation}\label{eq 3}
  \pi_{0}L_{s_{0},k}(X) \to \pi_{0}L_{s_{1},k}(X) \to \dots \to \pi_{0}L_{s_{p},k}(X) = \ast.
\end{equation}
Here, $L_{s_{i},k}(X)$ could be empty for small $s_{i}$, and we are assuming that $k$ is bounded above by the cardinality of $X$. There is one tunable parameter in this case, namely the density $k$.
\end{ex}

Starting with the data set $X \subset \mathbb{R}^{n}$ as above,
we continue to examine the string of functions
\begin{equation*}
  X = \pi_{0}V_{s_{0}}(X) \to \pi_{0}V_{s_{1}}(X) \to \dots \to \pi_{0}V_{s_{p}}(X) = \ast
\end{equation*}
as in (\ref{eq 1}), but with a different interpretation.


These functions together determine a graph $\Gamma(X)$, whose vertices are pairs $(s_{i},[x])$, where $[x]$ (sometimes $[x]=[x]_{i}$ or $[x]=[x]_{s_{i}}$, for precision) is a path component of a data point $x \in X$ in the complex $V_{s_{i}}(X)$. The edges of the graph have the form $(s_{i},[x]) \to (s_{i+1},[x])$. The path component represented by $x$ in $V_{s_{i}}(X)$ maps to the path component represented by $x$ in $V_{s_{i+1}}(X)$ under the function $\pi_{0}V_{s_{i}}(X) \to \pi_{0}V_{s_{i+1}}(X)$, as is standard.

The graph $\Gamma(X)$ is the {\it hierarchy graph} for $\pi_{0}V_{\ast}(X)$. This graph is a tree, and is the hierarchical clustering arising from the complexes $V_{s}(X)$, but we go further.

The vertex $(s_{i},[x])$ is a {\it branch point} if, equivalently,
\begin{itemize}
  \item[1)]
    there are distinct path components $[u],[v]$ of $V_{s_{i-1}}(X)$ such that $[u]=[v]=[x]$ in $\pi_{0}V_{s_{i}}(X)$, or
  \item[2)]
    the inclusion $[x]_{i-1} \subset [x]_{i}$ of path components is not surjective.
    \end{itemize}

Remove all edges that terminate in branch points from the hierarchy graph $\Gamma(X)$, to produce a subgraph $\Gamma_{0}(X)$, called the {\it component graph}. The path components of the graph $\Gamma_{0}(X)$ the {\it stable components} of the data set $X$.
\medskip

\begin{rem}\label{rem 4}
These definitions can be generalized to arbitrary strings of functions
\begin{equation*}
  F:\ F_{0} \xrightarrow{\alpha} F_{1} \xrightarrow{\alpha} \dots \xrightarrow{\alpha} F_{p}.
\end{equation*}
The {\it hierarchy graph} $\Gamma(F)$ has vertices $(i,x)$ with $x \in F_{i}$, and has edges
\begin{equation*}
  (i,x) \to (i+1,\alpha(x)).
\end{equation*}
I say that the vertex $(i,x)$ is a {\it branch point} of $\Gamma(F)$ if there are distinct elements $y,z \in F_{i-1}$ such that $\alpha(y) = \alpha(z) = x$. Remove all edges terminating in branch points from the graph $\Gamma(F)$ to form the {\it component graph} $\Gamma_{0}(F)$, and then the {\it stable components} of $F$ are the path components of the graph $\Gamma_{0}(F)$.

At this level of generality, the hierarchy graph $\Gamma(F)$ is a disjoint union of trees, one for each element of the set $F_{p}$.
\end{rem}

\begin{ex}
  The hierarchy graph $\Gamma(\pi_{0}L_{\ast,k}(X))$ that is associated to the string of functions (\ref{eq 3}) is the tree of the HDBSCAN* algorithm.

  The stable components for the string (\ref{eq 3}), meaning the path components of the component graph $\Gamma_{0}(\pi_{0}L_{\ast.k}(X))$, are said to be clusters in \cite[Sec.2.3]{HMc}. 
\end{ex}

Applying the path component construction $\pi_{0}$ to the array of simplicial complexes
\begin{equation*}
  \xymatrix@=12pt{
    L_{s_{i},k}(X) \ar[r] & L_{s_{i+1},k}(X) \\
    L_{s_{i},k+1}(X) \ar[u] \ar[r] & L_{s_{i+1},k+1}(X) \ar[u]
  }
\end{equation*}
produces an array of functions
\begin{equation}\label{eq 4}
\xymatrix@=14pt{
\pi_{0}L_{s_{i},k}(X) \ar[r] & \pi_{0}L_{s_{i+1},k}(X) \\
\pi_{0}L_{s_{i},k+1}(X) \ar[r] \ar[u] & \pi_{0}L_{s_{i+1},k+1}(X) \ar[u]
}
\end{equation}

This array is finite, and it is a special case of a finite array of functions $F$ having the form
\begin{equation}\label{eq 5} 
\xymatrix@=14pt{
F_{0,0} \ar[r]^{\alpha}& F_{1,0} \ar[r] & \dots \\
F_{0,1} \ar[r]_{\alpha} \ar[u]^{\beta} & F_{1,1} \ar[r] \ar[u]_{\beta} & \dots \\
\vdots \ar[u] & \vdots \ar[u]
}
\end{equation}
where $F_{i,k} = \pi_{0}L_{s_{i},k}(X)$.

There is again a ``hierarchy'' graph $\Gamma(F)$ with vertices $((m,k),x)$ with $x \in F_{m,k}$, and edges
\begin{equation*}
  ((m,k),x) \to ((m+1,k),\alpha(x))\enskip \text{and}\enskip
  ((m,k+1),y) \to ((m,k),\beta(y)),
\end{equation*}
called {\it horizontal} and {\it vertical} edges, respectively.

The vertex $((m,k),x)$ is a {\it horizontal branch point} if there are two elements $y,z \in F_{m-1,k}$ such that $\alpha(y)=\alpha(z)=x$. Similarly, $((m,k),x)$ is a {\it vertical branch point} if there are elements $u,v \in F_{m,k+1}$ such that $\beta(u) = \beta(v) =x$.

Remove all edges terminating in {\it either} horizontal or vertical branch points from the graph $\Gamma(F)$ to form the {\it component graph} $\Gamma_{0}(F)$. Observe that the graphs $\Gamma(F)$ and $\Gamma_{0}(F)$ have the same vertices. The path components $\pi_{0}\Gamma_{0}(F)$ are the {\it stable components} of the array $F$. 

\begin{ex}\label{ex 6}
  The set of vertices for the complex $L_{s,k}(X)$ is empty for $k \geq m$, for some $m$. Holding $s$ fixed and letting $k$ vary gives a string of functions
  \begin{equation*}
    \emptyset = \pi_{0}L_{s,m}(X) \to \pi_{0}L_{s,m-1}(X) \to \dots \to \pi_{0}L_{s,0}(X) = \pi_{0}V_{s}(X).
    \end{equation*}
  The corresponding graph $\Gamma(\pi_{0}L_{s,\ast}(X))$ is a disjoint union of trees, which is the analogue of the cluster tree of a density function \cite{Stuetzle}
  for the present context. The cluster tree is also a type of hierarchy graph. See also \cite{CM-2010}.
  \end{ex}

 \section{Layers}

 Suppose again that $X \subset \mathbb{R}^{n}$ is a finite data set, and consider the ascending sequence of Vietoris-Rips complexes
 \begin{equation}\label{eq 6}
   X=V_{s_{0}}(X) \subset V_{s_{1}}(X) \subset \dots \subset V_{s_{p}}(X)
 \end{equation}
 that is associated to the phase change numbers  $0=s_{0} < s_{1} < s_{2} < \dots < s_{p}$. Form the associated string of functions
 \begin{equation*}
X=\pi_{0}V_{s_{0}}(X) \to \pi_{0}V_{s_{2}}(X) \to \dots \to \pi_{0}V_{s_{p}}(X)=\ast
 \end{equation*}
between path components.  

Given an edge $(s_{i},[x]_{s_{i}}) \to (s_{i+1},[x]_{s_{i+1}})$ in the associated hierarchy graph $\Gamma(F)=\Gamma(X)$, the vertex $(s_{i+1},[x]_{s_{i+1}})$ is not a branch point if and only if the cardinalities of the path components $[x]_{s_{i}} \in \pi_{0}V_{s_{i}}(X)$ and $[x]_{s_{i+1}} \in
\pi_{0}V_{s_{i+1}}(X)$ coincide. This means that $[x]_{s_{i}} = [x]_{s_{i+1}}$ as subsets of $X$.

It follows that a stable component for the sequence $F=\{ \pi_{0}V_{s_{j}}(X) \}$
is a string of edges
\begin{equation*}
  P:\ (i,[x]_{i}) \to (i+1,[x]_{i+1}) \to \dots \to (i+n,[x]_{i+n})
\end{equation*}
in $\Gamma(F)$ of maximal length such that all components $[x]_{j} \in \pi_{0}V_{s_{j}}(X)$ coincide as subsets of the data set $X$.

Each stable component $P$ as above contains a unique branch point, namely the vertex $(i,[x]_{i})$ at the beginning of the string. It follows that stable components of $X$ can be identified with the branch points of the hierarchy $\Gamma(F)=\Gamma(X)$.
\medskip

Fatten up the sequence (\ref{eq 6}) of Vietoris-Rips complexes to the array $\{ L_{s_{i},k}(X)\}$ of Lesnick complexes, write
$F_{i,k}= \pi_{0}L_{s_{i},k}(X)$, and form the graph $\Gamma(F)$ as in the last section.

Form a subgraph $\Gamma_{0}'(F)$ of $\Gamma(F)$ by saying that
\begin{equation}\label{eq 7}
  \begin{aligned}
    &((i,k),[x]) \to ((i+1,k),[x])\enskip \text{or}\\
    &((i,k),[z]) \to ((i,k-1),[z])
  \end{aligned}
  \end{equation}
is an edge of $\Gamma_{0}'(F)$ if and only if the path components $[x] \in \pi_{0}L_{i,k}(X)$ and $[x] \in \pi_{0}L_{i+1,k}(X)$ (respectively, $[z] \in \pi_{0}L_{i,k}(X)$ and $[z] \in \pi_{0}L_{i,k-1}(X)$) coincide as subsets of $X$. 

\begin{rem}
If $[z]=[z]_{i,k}$ is path component of $L_{i,k}(X)$, then $z$ represents a path component $[z]_{i,k-1}$ of $L_{i,k-1}(X)$, and there is an inclusion $[z]_{i,k} \subset [z]_{i,k-1}$ of subsets of $X$. If the element $((i,k-1),[z])=((i,k-1),[z]_{i,k-1})$ is a vertical branch point, then $[z]_{i,k-1}$ contains a path component from $L_{i,k}(X)$ in addition to $[z]_{i,k}$, and so $[z]_{i,k} \ne [z]_{i,k-1}$ as subsets of $X$. 

Similar considerations hold in the horizontal case: if $[x]_{i,k}=[x]_{i+1,k}$ as subsets of $X$, then $((i+1,k),[x]_{i+1,k})$ cannot be a horizontal branch point.
\end{rem}

The graph $\Gamma_{0}'(F)$ is the {\it layer graph} for $F$. 
The edges of $\Gamma_{0}'(F)$ are edges of the graph $\Gamma(F)$ for which the size of path component subsets is preserved.

If either of the edges in (\ref{eq 7}) is in the layer subgraph $\Gamma_{0}'(F)$, then the target in each case cannot be a horizontal (respectively) vertical branch point, because of the preservation of size of path components. It follows that the layer graph $\Gamma_{0}'(F)$ is a subgraph of the component graph $\Gamma_{0}(F)$.
\medskip

Suppose that $((s,k),[x])$ is a vertex of $\Gamma_{0}'(F)$, and suppose given a path
\begin{equation*}
  P:\ ((t_{0},k_{0}),[y_{0}]) \to \dots \to ((t_{n},k_{n}),[y_{n}])) = ((s,k),[x])
\end{equation*}
in $\Gamma_{0}'(F)$ which terminates at $((s,k),[x])$. Then $x \in X$ is in all path components $[y_{i}] \in \pi_{0}L_{t_{i},k_{i}}(X)$ since these subsets of $X$ are constant through the path, so that we can rewrite the path $P$ as
\begin{equation*}
  P:\ ((t_{0},k_{0}),[x]) \to \dots \to ((t_{n},k_{n}),[x]) = ((s,k),[x]).
\end{equation*}

In general, the path component $L$ of a vertex $((s,k),[x])$ in $\Gamma_{0}'(F)$ consists of vertices of the form $((t,r),[x])$.

For a vertex $((t,r),[x])$ and a path $P$ as above, if $(t,r)$ satisfies $t_{0} \leq t \leq t_{n}=s$ and $k=k_{n} \leq r \leq k_{0}$, then $x \in L_{t_{0},k_{0}}(X)$ so that $x \in L_{t,r}(X)$, and $x$ represents a path component $[x]$ for the three complexes
\begin{equation*}
  L_{t_{0},k_{0}}(X) \subset L_{t,r}(X) \subset L_{t_{n},k_{n}}(X).
\end{equation*}
It follows that the three path components represented by $x$ coincide, so that $((t,r),[x])$ is in the path component of $((s,k),[x])$.

It follows that every path $P$ in the layer graph $\Gamma'(F)$ generates a {\it square} $\sq(P)$ of vertices $((t,r),[x])$ with $t_{0} \leq t \leq s$ and $k \leq r \leq k_{0}$, and this square lies in the path component of $((s,k),[x])$.

Suppose now that $L$ is a path component in $\Gamma_{0}'(F)$ of a vertex $((t,r),[y])$. Find the elements $((s_{i},k_{i}),[y])$, $i = 1, \dots ,n$, of the component $L$ which are maximal in $s_{i}$ and minimal in $k_{i}$. Find all paths $Q_{1}, \dots ,Q_{n}$ in $L$ which terminate in one of the $((s_{i},k_{i}),[y])$, and which are maximal in the sense that they cannot be extended to longer paths. Then the path component $L$ in $\Gamma_{0}'(F)$ is a union of the squares associated to these maximal paths, in the sense that
\begin{equation}\label{eq 8}
  L = \cup_{i=1}^{n}\ \sq(Q_{i}).
\end{equation}

We have a graph inclusion $\Gamma_{0}'(F) \subset \Gamma_{0}(F)$, and both graphs have the same vertices. It follows that each path component (stable component) $P$ of the component graph $\Gamma_{0}(F)$ is a disjoint union of components of the graph $\Gamma_{0}'(F)$, meaning that
  \begin{equation*}
    P = \sqcup_{j}\ L_{j},
  \end{equation*}
  where the subsets $L_{j}$ are path components of $\Gamma_{0}'(F)$ that are contained in $P$.  In other words, each stable component is a disjoint union of layers. At the same time, we know from (\ref{eq 8}) that each layer $L_{j}$ is a union of squares.

  These observations, taken together, give a geometric picture of the stable components for the Lesnick filtration $\{ L_{s,k}(X) \}$ of a data set $X$.

  \begin{ex}
    The distinction between stable components and layers applied to all subdiagrams of the array $\{ L_{s_{i},k}(X) \}$ for which the complexes involved do not share a common set of vertices.

    Such is the case for the diagram of complexes
    \begin{equation*}
      L_{s_{0},k}(X) \subset L_{s_{1},k}(X) \subset \dots \subset L_{s_{p},k}(X)
    \end{equation*}
    which produces the HDBSCAN* algorithm of Example \ref{ex 3}. The stable components (or clusters of \cite{HMc}) break up into disjoint unions of layers in this case, but this decomposition remains to be interpreted.
    \end{ex}
  
  \section{Scoring}

  Continue with the data set $X \subset \mathbb{R}^{n}$, suppose again that the distinct lengths between elements of the finite set $X$ have the form
  \begin{equation*}
    0=s_{0} < s_{1} < \dots < s_{p}
  \end{equation*}
  and form the array of Lesnick complexes $\{ L_{s_{i},k}(X) \}$. These complexes are subcomplexes of the largest Vietoris-Rips complex $V_{s_{p}}(X)$, which can be identified with a large simplex $\Delta^{N}$, where $X$ has $N+1$ elements.

  Write $K_{s_{i},k}$ for the set of vertices of the complex $L_{s_{i},k}(X)$. Then there is an array of inclusions of vertices $\{ K_{s_{i},k} \}$ and a collection of surjective functions
  \begin{equation}\label{eq 9}
    p:  K_{s_{i},k} \to \pi_{0}L_{s_{i},k}(X).
  \end{equation}
  These functions $p$ are natural in $i$ and $k$, and together define a map of arrays. All sets $K_{s_{i},k}$ are subsets of the data set $X$. The sets $K_{s_{i},0}$ coincide with $X$.

  The function $p$ of (\ref{eq 9}) is defined by $p(x) = [x]$, where $[x]$ is the path component represented by the vertex $x$. The subset $p^{-1}([x])$ of $K_{s_{i},k}$ is the set of members of the path component $[x]$ of the complex $L_{s_{i},k}(X)$.

  The array of sets $K_{s_{i},k}$ defines a graph $\Gamma(K_{\ast,\ast})$ as before. Vertices are pairs $((s_{i},k),y)$ with $y \in K_{s_{i},k}$, and there are horizontal and vertical edges having the respective forms
  \begin{equation*}
    ((s_{i},k),y) \to ((s_{i+1},k),y)\enskip \text{and}\enskip
    ((s_{i},k),y) \to ((s_{i},k-1),y).
  \end{equation*}
  The functions $p$ of (\ref{eq 9}) define a graph homomorphism
  \begin{equation*}
    p: \Gamma(K_{\ast,\ast}) \to \Gamma(\pi_{0}L_{\ast,\ast}(X))
  \end{equation*}
  which is defined on vertices by $p((s_{i},k),y) = ((s_{i},k),[y])$.
  
Given $x \in X$, there is a subgraph $\Gamma_{x}(K) \subset \Gamma(K_{\ast,\ast})$ which has vertices $((s_{j},k),x)$ with $x \in K_{s_{j},k}$. Each subgraph $\Gamma_{x}(K)$ is connected, and the subgraphs $\Gamma_{x}(K)$ are the connected components of $\Gamma(K_{\ast,\ast})$. It follows that there is a graph isomorphism
  \begin{equation*}
    \bigsqcup_{x \in K_{0}}\ \Gamma_{x}(K) \xrightarrow{\cong} \Gamma(K_{\ast,\ast}).
  \end{equation*}

  Let $P$ be a stable component for the diagram $\pi_{0}L_{\ast,\ast}(X)$, and form the pullback diagram
  \begin{equation}\label{eq 10}
    \xymatrix{
    P \cap \Gamma(K_{\ast,\ast}) \ar[r] \ar[d] & \Gamma(K_{\ast,\ast}) \ar[d]^{p} \\
    P \ar[r]_-{i} & \Gamma(\pi_{0}L_{\ast,\ast}(X))
    }
  \end{equation}
  where $i$ is the composite inclusion $P \subset \Gamma_{0}(\pi_{0}L_{\ast,\ast}(X)) \subset \Gamma(\pi_{0}L_{\ast,\ast}(X))$ of graphs. 
  
  There is a graph isomorphism
  \begin{equation*}
  \bigsqcup_{x \in X}\ P \cap \Gamma_{x}(K) \xrightarrow{\cong} P \cap \Gamma(K_{\ast,\ast}) 
  \end{equation*}
  while the set of vertices $(P \cap \Gamma(K_{\ast,\ast})_{0}$ of the graph $P \cap \Gamma(K_{\ast,\ast})$ is a disjoint union of the sets $p^{-1}((s_{i},k),[y])) \subset K(s_{i},k)$ with $((s_{i},k),[y]) \in P$.

  It is standard to write $\vert F \vert$ for the number of elements in a finite set $F$. It follows that there is an identity
  \begin{equation}\label{eq 11}
    \sum_{x \in X}\ \vert (P \cap \Gamma_{x}(K))_{0} \vert =
    \sum_{((s_{i},k),[y]) \in P}\ \vert [y] \vert.
    \end{equation}
  The number
\begin{equation*}
  \zeta(x,P) = \vert (P \cap \Gamma_{x}(K))_{0} \vert
\end{equation*}
  is the number of vertices in $P$ which are represented by $x$. It is a combinatorial {\it stability measure} of $x$ with respect to $P$. This description of $\zeta(x,P)$ is adapted from the stability measure for $x$ of \cite{HMc}, which is an analytic invariant. 
  
The combinatorial persistence {\it score} $\sigma(P)$ of a stable component $P$ is the sum of all stability measures $\zeta(x,P)$:
\begin{equation*}
  \sigma(P) = \sum_{x \in K_{0}}\ \zeta(x,P), 
\end{equation*}
so that
\begin{equation*}
 \sigma(P) = \sum_{x \in X}\ \vert (P \cap \Gamma_{x}(K))_{0} \vert =
    \sum_{((s_{i},k),[y]) \in P}\ \vert [y] \vert.
  \end{equation*}
on account of the identity (\ref{eq 11}). 

Analogs of diagram (\ref{eq 10}) lead to similar analyses for all subobjects of the graph $\Gamma(\pi_{0}L_{\ast,\ast}(X))$, including
\begin{itemize}
\item[1)] layers for $\Gamma(\pi_{0}L_{\ast,\ast}(X))$,
  \item[2)] 
    stable components and layers for the graph $\Gamma(\pi_{0}L_{\ast,k}(X))$ (HDBSCAN* case), and
  \item[3)]
    stable components (which are also layers) for the graph
    $\pi_{0}L_{\ast,0}(X) = \pi_{0}K_{\ast}(X)$.
\end{itemize}

If the subobject $L \subset \Gamma(\pi_{0}L_{\ast,\ast}(X))$ is a layer, then we have a pullback diagram
  \begin{equation*}
    \xymatrix{
    L \cap \Gamma(K_{\ast,\ast}) \ar[r] \ar[d] & \Gamma(K_{\ast,\ast}) \ar[d]^{p} \\
    L \ar[r]_-{i} & \Gamma(\pi_{0}L_{\ast,\ast}(X))
    }
  \end{equation*}
  where $i$ is the inclusion of the layer $L$. Then
  \begin{equation*}
    L \cap \Gamma(K_{\ast,\ast}) = \bigsqcup_{x \in X}\ L \cap \Gamma_{x}(K).
  \end{equation*}
  In this case, either $L \cap \Gamma(K_{\ast,\ast}) = \emptyset$, or the intersection has a vertex $((s_{i},k),x)$ such that $x$ represents every vertex of $L$. It follows that
    \begin{equation*}
      \vert (L \cap \Gamma(K_{\ast,\ast})_{0} \vert = \vert L_{0} \vert \cdot \vert [x] \vert,
    \end{equation*}
    where $x$ is any choice of representative for a vertex $((s_{i},k),[x])$ of $L$, and $L_{0}$ is the set of vertices of the layer $L$. This is consistent with counting cardinalites of the fibres $p^{-1}(s_{i},k),[x])$, as dictated by the right hand side of equation (\ref{eq 11}).

    The score $\sigma(L)$ of a layer $L$ then has the rather simple form
    \begin{equation*}
      \sigma(L)
= \sum_{x \in X}\ \vert (L \cap \Gamma_{x}(K))_{0} \vert =
    \sum_{((s_{i},k),[y]) \in L}\ \vert [y] \vert = \vert L_{0} \vert \cdot \vert [x]\vert.
    \end{equation*}

    Note finally that since a stable component $P$
    is a disjoint union
    \begin{equation*}
      P = L_{1} \sqcup \dots \sqcup L_{k}
    \end{equation*}
    and scoring for $P$ and $L$ amounts to counting fibres for the graph map $p$, then there is a relation
    \begin{equation*}
      \sigma(P) = \sum_{i=1}^{k}\ \sigma(L_{i})
    \end{equation*}
    which relates the score of $P$ to the scores of its constituent layers.
    
    \section{Adding a point}

    Suppose that $Y = X \sqcup \{ y \}$ and that $d(y,x_{0})< r$ for some $x_{0} \in X$. The number $r$ should be tiny --- this will be made precise later.

    In this section, we compare stable components for the hierarchies $\Gamma(X) = \Gamma(V_{\ast}(X))$ and $\Gamma(Y) = \Gamma (V_{\ast}(Y))$ arising from the respective Vietoris-Rips complexes.
    
    The comparison requires a more versatile approach: 
we expand the systems $V_{\ast}(X)$ and $V_{\ast}(Y)$ (that are indexed by phase change numbers in previous sections) to functors
\begin{equation*}
  s \mapsto V_{s}(X)\enskip \text{and}\enskip s \mapsto V_{s}(Y),
\end{equation*}
respectively, for numbers $s$ in an interval $[0,R]$, where $R$ is an upper bound for the phase change numbers of both $X$ and $Y$.

We are then entitled to simplicial set maps $i:V_{s}(X) \subset V_{s}(Y)$ that are natural in $s \in [0,R]$, which are induced by the inclusion $X \subset Y$. The interesting behaviour of both of these systems, in isolation, occurs at the respective phase shift numbers, but a robust comparison mechanism requires all parameters.

    The functor $s \mapsto V_{s}(X)$ defines a set-valued functor $s \mapsto \pi_{0}V_{s}(X)$, and an expanded hierarchy $\Gamma(X)$ can be defined by analogy with what we have above: $\Gamma(X)$ is a graph (or a category) with vertices $(s,[x]_{s})$ with $s \in [0,R]$ and $[x]_{s} \in \pi_{0}V_{s}(X)$, and has edges $(s,[x]_{s}) \to (t,[x]_{t})$ with $s \leq t$. We shall sometimes write $[x]_{s,X}$ for $[x]_{s}$ where comparisons of data sets are involved.

    An inclusion of data sets $X \subset Y$ then induces a morphism of graphs
\begin{equation*}
  i:\Gamma(X) \to \Gamma(Y).
\end{equation*}

    For such comparisons, it is convenient to phrase the discussion in terms of layers. The layers of $\Gamma(X)$ can be identified with edges $L:(s,[x]_{s}) \to (t,[x]_{t})$ of $\Gamma(X)$ such that
    \begin{itemize}
      \item[1)]
        $s$ and $t$ are phase shift numbers of $X$,
\item[2)] 
  $[x]_{s}=[x]_{t}$ as subsets of $X$, and
\item[3)]
  the {\it length} $t-s$ of the edge $L$ is maximal with respect to the first two conditions.
\end{itemize}

    The vertex $(s,[x]_{s})$ at the initial vertex of a layer must be a branch point, and the full collection of layers of $\Gamma(X)$ can be identified with it set $Br(X)$ of branch points. The layers of $\Gamma(X)$ are the path components (the stable components) of the component graph $\Gamma_{0}(X)$ of the first section.

    A {\it partial layer} for $X$ is an edge $(s,[x]_{s}) \to (t,[x]_{t})$ of $\Gamma(X)$ such that $[x]_{s} = [x]_{t}$. In other words, a partial layer is an edge which satisfies only condition 2) above.
    \medskip
    
 Returning to the situation $X \subset Y = X \sqcup \{y\}$ of interest, write
  \begin{equation*}
    0=s_{0} < s_{1} < \dots < s_{k}
  \end{equation*}
for the list of phase change numbers (i.e. distances between points) for $X$.
  
We suppose that $y$ is close to an element $x_{0} \in X$, in the sense that $d(y,x_{0}) < r$, where
\begin{equation}\label{eq 12}
  r < s_{i+1}-s_{i}
\end{equation}
  for all $i \geq 0$. The inequality (\ref{eq 12}) is the meaning of the requirement that the number $r$ should be tiny.
  
  \begin{lemma}\label{lem 9}
    Suppose that $x \in X$ has the property that $y$ is not a member of the path component $[x]_{s,Y}$ of $V_{s}(Y)$.

    Then the inclusion $[x]_{s,X} \subset [x]_{s,Y}$ is a bijection.
  \end{lemma}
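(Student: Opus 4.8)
The plan is to prove the two inclusions of subsets separately, the forward one being purely formal and the reverse one being where the hypothesis $y \notin [x]_{s,Y}$ does all the work. First I would record that the inclusion $[x]_{s,X} \subseteq [x]_{s,Y}$ always holds and is injective: if $z \in X$ is joined to $x$ by a chain of short hops $x = w_{0}, w_{1}, \dots, w_{m} = z$ in $V_{s}(X)$, meaning $d(w_{j},w_{j+1}) \leq s$ for all $j$, then the very same chain is a chain of short hops in $V_{s}(Y)$, since $X \subset Y$ and the metric on $Y$ restricts to that on $X$. Thus $z \in [x]_{s,Y}$, and the set-level map is simply the inclusion of one subset of $Y$ into another, hence injective.

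For surjectivity it suffices to show $[x]_{s,Y} \subseteq [x]_{s,X}$. Since $V_{s}(Y)$ has vertex set $Y = X \sqcup \{y\}$, the component $[x]_{s,Y}$ is a subset of $Y$; the hypothesis $y \notin [x]_{s,Y}$ then forces $[x]_{s,Y} \subseteq X$. So every $z \in [x]_{s,Y}$ already lies in $X$, and it remains only to see that $z$ is joined to $x$ by a chain staying inside $X$.

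The crux is the observation that a connecting chain never leaves the path component it connects within. Given $z \in [x]_{s,Y}$, choose a chain of short hops $x = w_{0}, w_{1}, \dots, w_{m} = z$ in $V_{s}(Y)$. Each intermediate vertex $w_{j}$ is itself joined to $x$ by the truncated chain $w_{0}, \dots, w_{j}$, so $w_{j} \in [x]_{s,Y}$ for every $j$. Because $y \notin [x]_{s,Y}$, no $w_{j}$ can equal $y$, and hence every $w_{j}$ lies in $X$. The chain therefore lives entirely in $V_{s}(X)$, which exhibits $z \in [x]_{s,X}$. Combining the two inclusions shows the map is a bijection.

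The main point, and the only step requiring any care, is this last closure property: membership in a path component is witnessed by chains whose every vertex again lies in that component, so excluding $y$ from $[x]_{s,Y}$ automatically forbids any connecting chain from passing through $y$. I do not expect to need the tininess hypothesis $r < s_{i+1} - s_{i}$ here; that condition governs how the phase change numbers of $X$ and $Y$ interleave and should enter only in the comparison of hierarchies that follows, not in this purely combinatorial statement about a single fixed value of $s$.
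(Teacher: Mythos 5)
Your proof is correct and follows the same route as the paper's: the paper's one-line argument ``$y \notin [x]_{s,Y}$, so there is a path $x = y_{0} \leftrightarrow \dots \leftrightarrow y_{r} = x'$ with all $y_{i} \in X$'' is exactly your closure observation that intermediate vertices of a connecting chain lie in the component and hence cannot equal $y$, which you merely spell out in more detail. You are also right that the tininess condition $r < s_{i+1}-s_{i}$ plays no role here; the paper's proof does not invoke it either.
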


  \begin{proof}
    Suppose that $x' \in [x]_{s,Y}$. Then $y \notin [x]_{s,Y}$, so there is a path
    \begin{equation*}
      x = y_{0} \leftrightarrow y_{1} \leftrightarrow \dots \leftrightarrow y_{r} = x'
    \end{equation*}
    such that all $y_{i} \in X$. It follows that $x' \in [x]_{s,X}$.
    \end{proof}

  \begin{corollary}\label{cor 10}
    Suppose that $ s \leq t$ and that $y \notin [x]_{t,Y}$ in $V_{t}(Y)$.

    If $(s,[x]_{s,X}) \to (t,[x]_{t,X})$ is a partial layer for $X$ then $(s,[x]_{s,Y}) \to (t,[x]_{t,Y})$ is a partial layer for $Y$.
  \end{corollary}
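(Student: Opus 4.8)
The plan is to obtain the conclusion from two applications of Lemma \ref{lem 9}, one at the parameter $s$ and one at $t$, glued together by the partial-layer hypothesis for $X$. Since $s \leq t$, the pair $(s,[x]_{s,Y}) \to (t,[x]_{t,Y})$ is automatically an edge of $\Gamma(Y)$ by the definition of the hierarchy graph, so the only content to establish is the equality of subsets $[x]_{s,Y} = [x]_{t,Y}$ of $Y$, which is precisely the partial-layer condition.

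First I would propagate the hypothesis $y \notin [x]_{t,Y}$ down to the smaller parameter. The inclusion of complexes $V_{s}(Y) \subset V_{t}(Y)$ carries the path component of $x$ at level $s$ into its path component at level $t$, so $[x]_{s,Y} \subseteq [x]_{t,Y}$ as subsets of $Y$; consequently $y \notin [x]_{s,Y}$ as well. With both non-membership facts available, Lemma \ref{lem 9} applies at each level: at $s$ it forces the inclusion $[x]_{s,X} \subseteq [x]_{s,Y}$ to be a bijection, hence an equality of subsets of $Y$, and at $t$ it gives $[x]_{t,X} = [x]_{t,Y}$ in the same way.

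Finally I would chain these equalities with the partial-layer condition $[x]_{s,X} = [x]_{t,X}$ for $X$, obtaining
\[
  [x]_{s,Y} = [x]_{s,X} = [x]_{t,X} = [x]_{t,Y},
\]
which is exactly the partial-layer condition for $Y$. The only step demanding genuine care is the monotonicity observation that $[x]_{s,Y} \subseteq [x]_{t,Y}$, since it is what allows the level-$t$ hypothesis to do double duty at level $s$ and thereby license the application of Lemma \ref{lem 9} at the smaller parameter; everything else is a formal substitution of the three equalities.
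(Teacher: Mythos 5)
Your proof is correct and is essentially the paper's own argument: the paper records the same monotonicity observation (that $[x]_{s,Y} \subset [x]_{t,Y}$ for $s \leq t$, so $y \notin [x]_{t,Y}$ forces $y \notin [x]_{s,Y}$) and then applies Lemma \ref{lem 9} at both parameters, packaging your chain of equalities $[x]_{s,Y} = [x]_{s,X} = [x]_{t,X} = [x]_{t,Y}$ as a commutative square of inclusions whose vertical maps are bijections. No gaps; nothing further is needed.
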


  Note that if $y \notin [x]_{t,Y}$ then $y \notin [x]_{s,Y}$, because $[x]_{s,Y} \subset [x]_{t,Y}$ for $s \leq t$.

  \begin{proof}[Proof of Corollary \ref{cor 10}]
There is a commutative diagram of inclusions
\begin{equation*}
  \xymatrix{
    [x]_{s,X} \ar[r]^{=} \ar[d]_{=} & [x]_{t,X} \ar[d]^{=} \\
    [x]_{s,Y} \ar[r] & [x]_{t,Y}
  }
\end{equation*}
in which the vertical maps are bijections by Lemma \ref{lem 9}. The claim follows.
    \end{proof}

  \noindent
      {\bf Remark}:\enskip There could be layers $(s,[x]_{s,X}) \to (t,[x]_{t,X})$ of $X$ such that $y \notin [x]_{s,Y}$ and $y \in [x]_{t,Y}$. In that case, the corresponding edge $(s,[x]_{s,Y}) \to (t,[x]_{t,Y})$ cannot be a partial layer of $Y$ because the set $[x]_{t,Y}$ is strictly larger than $[x]_{s,Y}$.
      \medskip

In general, if $L$ is a simplicial complex and $V$ is a set of vertices of $L$, the {\it full subcomplex} of $L$ on the set $V$ consists of those simplices $\sigma$ of $L$ having all of their vertices in the set $V$.
      
  Write $V_{s}(X)(y)$ for the full subcomplex of $V_{s}(X)$, on the set of those vertices $x$ such that $[x]_{s,Y}=[y]_{s,Y}$ in $V_{s}(Y)$. 

  \begin{lemma}\label{lem 11}
    Suppose that $s_{i} \leq s < s_{i+1}-r$ for all $i \geq 1$. Then we have the following:
    \begin{itemize}
    \item[1)]
      The space $V_{s}(X)(y)$ is connected.
    \item[2)]
      The function $\pi_{0}V_{s}(X) \to \pi_{0}V_{s}(Y)$ is a bijection.
    \end{itemize}
  \end{lemma}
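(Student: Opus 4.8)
The plan is to reduce both assertions to a single \emph{rerouting} principle: because $y$ lies within distance $r$ of $x_{0}$ while consecutive phase change numbers are separated by gaps larger than $r$, the vertex $y$ contributes no edge to $V_{s}(Y)$ beyond those already forced by $x_{0}$ inside $V_{s}(X)$. Throughout I fix the index $i \geq 1$ with $s_{i} \leq s < s_{i+1}-r$. First I would record that $y$ and $x_{0}$ lie in the same component of $V_{s}(Y)$: since $i \geq 1$ the hypothesis $r < s_{j+1}-s_{j}$ at $j=0$ gives $r < s_{1}$, so $d(y,x_{0}) < r < s_{1} \leq s_{i} \leq s$, whence $\{x_{0},y\}$ is an edge of $V_{s}(Y)$ and $[y]_{s,Y} = [x_{0}]_{s,Y}$. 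In particular $V_{s}(X)(y)$ is the full subcomplex of $V_{s}(X)$ on the $X$-vertices lying in $[x_{0}]_{s,Y}$.

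The crux is the following claim, which is where the entire weight of the gap hypothesis is carried, and which I expect to be the main (if short) obstacle: every $X$-vertex $x$ with $d(y,x)\leq s$, i.e.\ every $X$-neighbour of $y$ in $V_{s}(Y)$, already satisfies $d(x_{0},x)\leq s_{i}\leq s$. Indeed, by the triangle inequality $d(x_{0},x) \leq d(x_{0},y)+d(y,x) < r + s < r + (s_{i+1}-r) = s_{i+1}$. Since $d(x_{0},x)$ is a distance between points of $X$, it is one of the phase change numbers, and being strictly below $s_{i+1}$ it is at most $s_{i} \leq s$; hence $x$ is joined to $x_{0}$ by an edge of $V_{s}(X)$. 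The delicate point is making the coarse bound $d(y,x) \leq s$ interact with the \emph{discreteness} of the phase change numbers to upgrade ``$< s_{i+1}$'' to ``$\leq s_{i}$''; without the gap condition $r < s_{i+1}-s_{i}$ and the restriction $s < s_{i+1}-r$ this step fails.

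With the claim in hand the rerouting argument is routine. Given any edge-path in $V_{s}(Y)$, replace each occurrence of $y$ by $x_{0}$; by the claim consecutive vertices remain edge-connected (or become equal) in $V_{s}(X)$, producing a walk in $V_{s}(X)$ with the same $X$-endpoints. For part 1), this shows that the $X$-vertices lying in $[x_{0}]_{s,Y}$ are precisely the vertices of $[x_{0}]_{s,X}$; since the full subcomplex of $V_{s}(X)$ on a path component equals that component, $V_{s}(X)(y)$ is connected. For part 2), surjectivity of $\pi_{0}V_{s}(X)\to\pi_{0}V_{s}(Y)$ is immediate, as $[x_{0}]_{s,Y}$ is the only component of $V_{s}(Y)$ meeting $y$ and it contains $x_{0}\in X$; injectivity then follows by combining part 1) with Lemma \ref{lem 9}. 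Any component of $V_{s}(Y)$ not containing $y$ restricts to its $X$-component bijectively by Lemma \ref{lem 9}, so distinct such components come from distinct $X$-components, while part 1) shows the single $y$-component meets only the one $X$-component $[x_{0}]_{s,X}$. Hence no two $X$-components are merged, and the function is a bijection.
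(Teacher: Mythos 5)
Your proof is correct and follows essentially the same route as the paper's: the paper also reduces everything to the observation that a neighbour $y_{n}$ of $y$ in $V_{s}(Y)$ satisfies $d(y_{n},x_{0}) < s+r$ and then invokes the absence of phase change numbers in $(s, s_{i+1})$ (phrased there as $V_{s}(X) = V_{s+r}(X)$, phrased by you as ``a distance in $X$ strictly below $s_{i+1}$ is at most $s_{i}$''), and part 2) is likewise handled by surjectivity via $[y]_{s,Y}=[x_{0}]_{s,Y}$ together with Lemma \ref{lem 9} and part 1) for injectivity. Your rerouting formulation (replacing every occurrence of $y$ by $x_{0}$) is a slightly more explicit packaging of the paper's truncate-at-$y$ argument, but it is the same idea.
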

  
  \begin{proof}
  1)\enskip  Suppose that $x$ is a vertex of $V_{s}(X)(y)$. Then there is a sequence of $1$-simplices
    \begin{equation*}
      x=y_{0} \leftrightarrow y_{1} \leftrightarrow \dots \leftrightarrow y_{n} \leftrightarrow y
      \end{equation*}
    in $V_{s}(Y)$  with $y_{i} \in X$. Then $d(y,x_{0}) < r$, so $d(y_{n},x_{0}) < s+r$ and $[x]=[x_{0}]$ in $V_{s+r}(X)$. But $V_{s}(X) = V_{s+r}(X)$ since $s_{i} \leq s < s+r < s_{i+1}$, and so $[x]=[x_{0}]$ in $V_{s}(X)$. This is true for all vertices $x \in V_{s}(X)(y)$, so that $V_{s}(X)(y)$ is connected.
    \smallskip

    \noindent
    2)\enskip Suppose that $x_{1},x_{2}$ are vertices of $V_{s}(X)$ such that $[y] \ne [x_{1}]$ and $[y] \ne [x_{2}]$ in $V_{s}(Y)$. Then it follows from Lemma \ref{lem 9} that $[x_{1}]_{s,X} = [x_{2}]_{s,X}$ in $V_{s}(Y)$ if and only if $[x_{1}]_{s,Y} = [x_{2}]_{s,Y}$ in $V_{s}(X)$.

    The function $i_{\ast}: \pi_{0}V_{s}(X) \to \pi_{0}V_{s}(Y)$ is surjective since $d(y,x_{0}) < r < s_{1} \leq s$, so that $[y]_{s,Y} = [x_{0}]_{s,Y}$. The previous paragraphs imply that $i_{\ast}$ is injective.
  \end{proof}

  \begin{corollary}
Under the assumptions for Lemma \ref{lem 11}, the map $i_{\ast}: \pi_{0}V_{s_{i}}(X) \to \pi_{0}V_{s_{i}}(Y)$ is a bijection at all phase change numbers $s_{i}$ for $X$,  for $i \geq 1$.
  \end{corollary}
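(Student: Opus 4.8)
The plan is to observe that this corollary is essentially nothing more than the special case $s = s_{i}$ of part 2) of Lemma \ref{lem 11}, so the whole task reduces to verifying that each phase change number $s_{i}$ (for $i \geq 1$) is an admissible parameter for that lemma. First I would recall that the hypothesis of Lemma \ref{lem 11} asks for a parameter lying in a half-open interval of the form $[s_{i}, s_{i+1}-r)$. The left endpoint is \emph{included}, so the only thing that needs checking is that $s_{i}$ lies strictly below the right endpoint, i.e.\ that $s_{i} < s_{i+1}-r$.

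This strict inequality is precisely a rearrangement of the tininess condition (\ref{eq 12}), namely $r < s_{i+1}-s_{i}$, which was imposed on $r$ at the outset. Hence $s = s_{i}$ satisfies $s_{i} \leq s_{i} < s_{i+1}-r$ and is a legitimate input to Lemma \ref{lem 11}. I would then simply invoke part 2) of that lemma with this choice to conclude that $i_{\ast}: \pi_{0}V_{s_{i}}(X) \to \pi_{0}V_{s_{i}}(Y)$ is a bijection, which is exactly the claim. Since this argument runs uniformly over all $i \geq 1$, the bijection holds at every phase change number of $X$ with $i \geq 1$.

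I do not anticipate any genuine obstacle: the mathematical content is fully contained in Lemma \ref{lem 11}, and the corollary only records the bookkeeping fact that the lemma's half-open intervals were arranged, via the smallness of $r$, to be closed at their left endpoints, so that the phase change numbers themselves are swept up. The single point worth making explicit is that $s_{i}$ is caught by the interval indexed by $i$ rather than slipping through a gap between consecutive intervals --- and this is exactly the role played by inequality (\ref{eq 12}), which guarantees that $s_{i+1}-r$ still exceeds $s_{i}$.
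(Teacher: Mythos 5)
Your proposal is correct and matches the paper's (implicit) reasoning exactly: the paper states this corollary without a separate proof precisely because it is the specialization $s = s_{i}$ of Lemma \ref{lem 11}(2), with admissibility of $s_{i}$ guaranteed by the rearrangement of inequality (\ref{eq 12}) into $s_{i} < s_{i+1}-r$, just as you observe. Nothing further is needed.
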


\begin{corollary}\label{cor 13}
  Suppose that the edge $(s,[x]_{s,X}) \to (t,[x]_{t,X})$ is a partial layer of $X$. Suppose that $y \in [x]_{s,Y}$ and that
  $s$ and $t$ are phase change numbers for $X$, with $s_{1} \leq s \leq t$.

  Then the edge $(s,[x]_{s,Y}) \to (t,[x]_{t,Y})$ of $\Gamma(Y)$ is a partial layer of $Y$.
\end{corollary}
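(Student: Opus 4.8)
The plan is to prove the set equality $[x]_{s,Y} = [x]_{t,Y}$ directly, since this equality is exactly the condition for the edge $(s,[x]_{s,Y}) \to (t,[x]_{t,Y})$ to be a partial layer of $Y$. Because $s \leq t$, the inclusion $V_{s}(Y) \subseteq V_{t}(Y)$ forces $[x]_{s,Y} \subseteq [x]_{t,Y}$ for free, so the entire problem reduces to establishing the reverse inclusion $[x]_{t,Y} \subseteq [x]_{s,Y}$.

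To obtain the reverse inclusion I would first invoke Lemma \ref{lem 11} at both parameters $s$ and $t$. Both are phase change numbers with $s_{1} \leq s \leq t$, and the tininess hypothesis (\ref{eq 12}) guarantees $s_{j} < s_{j+1}-r$ at every phase change number $s_{j}$, so the hypotheses of Lemma \ref{lem 11} are met at $s$ and at $t$; consequently the functions $\pi_{0}V_{s}(X) \to \pi_{0}V_{s}(Y)$ and $\pi_{0}V_{t}(X) \to \pi_{0}V_{t}(Y)$ are bijections. The key point I would extract is the injectivity at $t$: for $x',x'' \in X$ one has $[x']_{t,Y} = [x'']_{t,Y}$ only if $[x']_{t,X} = [x'']_{t,X}$, which translates into the set-level statement $X \cap [x]_{t,Y} = [x]_{t,X}$ (the containment $[x]_{t,X} \subseteq X \cap [x]_{t,Y}$ being automatic from $V_{t}(X) \subseteq V_{t}(Y)$). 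I would then run a short case analysis on an arbitrary element $z \in [x]_{t,Y} \subseteq Y = X \sqcup \{y\}$. If $z = y$, then $z \in [x]_{s,Y}$ is precisely the standing hypothesis $y \in [x]_{s,Y}$. If instead $z \in X$, then $z \in X \cap [x]_{t,Y} = [x]_{t,X}$, and since $(s,[x]_{s,X}) \to (t,[x]_{t,X})$ is a partial layer of $X$ we have $[x]_{t,X} = [x]_{s,X} \subseteq [x]_{s,Y}$, so again $z \in [x]_{s,Y}$. This yields $[x]_{t,Y} \subseteq [x]_{s,Y}$ and hence the desired equality.

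The step I expect to require the most care is the passage from the $\pi_{0}$-bijection of Lemma \ref{lem 11} to the set equality $X \cap [x]_{t,Y} = [x]_{t,X}$: this is genuinely the injective half of the bijection, the statement that two points of $X$ separated in $V_{t}(X)$ remain separated in $V_{t}(Y)$, which is the same phenomenon already isolated in Lemma \ref{lem 9}. It is worth noting that only part 2) of Lemma \ref{lem 11} is needed here, not the connectivity of $V_{s}(X)(y)$, and that the hypothesis $y \in [x]_{s,Y}$ enters exactly once, to absorb the single new point $y$ into $[x]_{s,Y}$; without it the edge on the $Y$-side would fail to be a partial layer, as the remark preceding the statement illustrates.
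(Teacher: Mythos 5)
Your proof is correct, but it routes through a different half of Lemma \ref{lem 11} than the paper does. The paper's proof uses only part 1): connectivity of $V_{s}(X)(y)$ and $V_{t}(X)(y)$ yields the structural identifications $[x]_{s,Y} = \{y\} \sqcup (V_{s}(X)(y))_{0} = \{y\} \sqcup [x]_{s,X}$ and likewise at $t$, after which the partial-layer equality $[x]_{s,X}=[x]_{t,X}$ transfers immediately to $Y$. You instead note the forward inclusion is free and extract the reverse inclusion $[x]_{t,Y} \subseteq [x]_{s,Y}$ from the injectivity half of part 2) at $t$ alone, via the set-level identity $X \cap [x]_{t,Y} = [x]_{t,X}$ and a two-case element chase. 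Both arguments are sound, and yours is slightly more economical in that it makes visible that only the identification at the top parameter $t$ is needed. One caveat about your closing remark that ``the connectivity of $V_{s}(X)(y)$ is not needed'': that is true only at the level of which statement you cite. The instance of injectivity you invoke is precisely at the component containing $y$ (since $y \in [x]_{s,Y} \subseteq [x]_{t,Y}$), and Lemma \ref{lem 9} covers injectivity only for components avoiding $y$; in the paper's proof of Lemma \ref{lem 11}, injectivity at the $y$-component is exactly what the connectivity assertion of part 1) supplies. So connectivity is doing the real work in your argument as well, just smuggled in through the proof of part 2). A mild further point: the paper's formulation $[x]_{\cdot,Y} = \{y\} \sqcup [x]_{\cdot,X}$ is the sharper output, and it is what gets reused in the subsequent results (the converse corollary and Proposition \ref{prop 15}), whereas your version proves exactly the stated corollary and no more.
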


\begin{proof}
The spaces $V_{s}(X)(y)$ and $V_{t}(X)(y)$ are connected by Lemma \ref{lem 11}. It follows that
\begin{equation*}
\begin{aligned}
{}\enskip  [x]_{s,Y} &= \{y\} \sqcup (V_{s}(X)(y))_{0} = \{y\} \sqcup [x]_{s,X} \\
&= \{y\} \sqcup [x]_{t,X} = \{y\} \sqcup (V_{t}(Y)(t))_{0} = [x]_{t,Y}
\end{aligned}
\end{equation*}
in $Y$.
\end{proof}

\begin{corollary}
  Suppose that $(s,[x]_{s,Y}) \to (t,[x]_{t,Y})$ is a partial layer of $Y$ such that $y \in [x]_{s,Y}$ and $x \ne y$. Suppose that $s, t$ are phase change numbers for $X$ with $s_{1} \leq s \leq t$.

  Then the edge $(s,[x]_{s,X}) \to (t,[x]_{t,X})$ is a partial layer of $X$.
\end{corollary}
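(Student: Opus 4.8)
The plan is to reverse the computation in the proof of Corollary \ref{cor 13}: I would recover the subsets $[x]_{s,X}$ and $[x]_{t,X}$ from the corresponding $Y$-components by deleting the single adjoined point $y$, and then use that the two $Y$-components agree.

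First I would record two immediate facts. Since $y \in [x]_{s,Y}$ and $[x]_{s,Y} \subseteq [x]_{t,Y}$ (because $s \le t$), the point $y$ lies in both $[x]_{s,Y}$ and $[x]_{t,Y}$, so these components are $[y]_{s,Y}$ and $[y]_{t,Y}$ respectively. Also, $x \ne y$ forces $x \in X$, so $x$ is a vertex of the full subcomplexes $V_s(X)(y)$ and $V_t(X)(y)$. Because $s$ and $t$ are phase change numbers for $X$ with $s \ge s_1$, the inequality (\ref{eq 12}) lets me apply Lemma \ref{lem 11} at both parameters, so $V_s(X)(y)$ and $V_t(X)(y)$ are connected.

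Next I would identify vertex sets. By definition the vertex set of $V_s(X)(y)$ is $X \cap [x]_{s,Y}$. The inclusion $[x]_{s,X} \subseteq X \cap [x]_{s,Y}$ is clear, since an $X$-path is in particular a $Y$-path. For the reverse inclusion I would use connectivity: any two vertices of $X \cap [x]_{s,Y}$ lie in the connected subcomplex $V_s(X)(y) \subseteq V_s(X)$, hence in a common path component of $V_s(X)$, which must be $[x]_{s,X}$. Thus $(V_s(X)(y))_0 = [x]_{s,X}$, and since $y \notin X$ this gives the disjoint decomposition $[x]_{s,Y} = \{y\} \sqcup [x]_{s,X}$; the identical argument at $t$ yields $[x]_{t,Y} = \{y\} \sqcup [x]_{t,X}$. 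Combining with the partial-layer hypothesis $[x]_{s,Y} = [x]_{t,Y}$ gives $\{y\} \sqcup [x]_{s,X} = \{y\} \sqcup [x]_{t,X}$, and deleting $y \notin X$ leaves $[x]_{s,X} = [x]_{t,X}$, which is exactly the assertion that $(s,[x]_{s,X}) \to (t,[x]_{t,X})$ is a partial layer of $X$.

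I expect the only genuine obstacle to be the reverse inclusion $X \cap [x]_{s,Y} \subseteq [x]_{s,X}$: a priori two points of $X$ might be joined in $V_s(Y)$ only by passing through the external vertex $y$, and so fail to be connected inside $V_s(X)$. Lemma \ref{lem 11}(1) is precisely what excludes this, validating the decomposition $[x]_{s,Y} = \{y\} \sqcup [x]_{s,X}$; the one point requiring care is checking that the hypotheses of Lemma \ref{lem 11} genuinely hold at the (possibly extremal) phase change numbers $s$ and $t$.
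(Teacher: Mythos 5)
Your proof is correct and takes essentially the same route as the paper: both arguments use Lemma \ref{lem 11} to identify $[x]_{s,X}$ and $[x]_{t,X}$ with the vertex sets of the connected subcomplexes $V_{s}(X)(y)$ and $V_{t}(X)(y)$, and then transport the partial-layer bijection $[x]_{s,Y} \to [x]_{t,Y}$ across the decomposition $[x]_{s,Y} = \{y\} \sqcup [x]_{s,X}$ to conclude $[x]_{s,X} = [x]_{t,X}$. Your explicit checks --- the reverse inclusion $X \cap [x]_{s,Y} \subseteq [x]_{s,X}$ via connectivity, the use of $x \ne y$ to place $x$ in $X$, and the verification that Lemma \ref{lem 11} applies at the phase change numbers $s$ and $t$ --- simply spell out steps the paper leaves implicit.
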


\begin{proof}
The function
$V_{s}(X)(y)_{0} \to V_{t}(X)(y)_{0}$ is a bijection, since $[x]_{s,Y} \to [x]_{t,Y}$ is a bijection, and the spaces $V_{s}(X)(y)$ and $V_{t}(X)(y)$ are path connected by Lemma \ref{lem 11}. It follows that the function $[x]_{s,X} \to [x]_{t,X}$ is a bijection.
\end{proof}

Suppose that $L: (s,[x]_{s,X}) \to (t,[x]_{t,X})$ is a layer of $X$ such that either $y \in [x]_{s,Y}$ or $y \notin [x]_{t,Y}$ and $s_{1} \leq s$. In both cases, the edge $(s,[x]_{s,Y}) \to (t,[x]_{t,Y})$ is a partial layer for $Y$, by Corollary \ref{cor 13} and Corollary \ref{cor 10}, respectively.

Suppose that $s'$ and $t'$ are phase change numbers of $X$ such that
\begin{equation*}
  s_{1} \leq s' < s \leq t < t'
\end{equation*}
and such that $(s',[x]_{s',Y}) \to (t',[x]_{t',Y})$ is a partial layer of $Y$. Then we have the following:
\medskip

\noindent
1)\enskip If $y \in [x]_{s,Y}$ then $y \in [x]_{s',Y}$. The maps $[x]_{s',X} \to [x]_{s,X}$ and $[x]_{t,X} \to [x]_{t',X}$ are bijections on account of the connectedness assertion of Lemma \ref{lem 11}, and it follows that $(s',[x]_{s',X}) \to (t',[x]_{t',X})$ is a partial layer of $X$. But $L$ is a layer, so that $s'=s$ and $t=t'$ by the maximality condition.
\medskip

\noindent
2)\enskip Suppose that $y \notin [x]_{t,Y}$. Then $y \notin [x]_{t',Y}$ since $[x]_{t,Y} = [x]_{t',Y}$. Then the edge $(s',[x]_{s',X}) \to (t',[x]_{t',X})$ is a partial layer of $X$, since the diagram of functions
\begin{equation*}
  \xymatrix{
    [x]_{s',X} \ar[r] \ar[d]_{=} & [x]_{t',X} \ar[d]^{=} \\
    [x]_{s',Y} \ar[r]_{=} & [x]_{t',Y}
  }
\end{equation*}
commutes, as in the proof of Corollary \ref{cor 10}. The maximality condition for the layer $L$ again implies that $s'=s$ and $t=t'$.

We have proved the following result:

\begin{proposition}\label{prop 15}
  Suppose that $L: (s_{i},[x]_{s_{i},X}) \to (s_{j},[x]_{s_{j},X})$ is a layer of $X$ with $s_{1} \leq s_{i}$. Suppose that either $y \in [x]_{s_{i},Y}$ or $y \notin [x]_{s_{j},Y}$.

  Then the edge $L: (s_{i},[x]_{s_{i},Y}) \to (s_{j},[x]_{s_{j},Y})$ is a partial layer for $Y$, and the layer $(s,[x]_{s,Y}) \to (t,[x]_{t,Y})$ containing $L$ must have $s_{i-1} < s \leq s_{i}$ and $s_{j} \leq t < s_{j+1}$.
\end{proposition}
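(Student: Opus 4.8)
The plan is to prove the statement in two stages: first that the image edge is a partial layer of $Y$, and then that the unique layer of $Y$ containing it has its endpoints pinned into the half-open intervals $(s_{i-1},s_i]$ and $[s_j,s_{j+1})$.

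For the first stage I would simply invoke the two corollaries already in hand. Since $L$ is in particular a partial layer of $X$ and $s_1 \leq s_i \leq s_j$, the hypothesis $y \in [x]_{s_i,Y}$ puts us in the situation of Corollary \ref{cor 13}, while the hypothesis $y \notin [x]_{s_j,Y}$ (which by monotonicity of components forces $y \notin [x]_{s_i,Y}$ as well) puts us in the situation of Corollary \ref{cor 10}; in either case $(s_i,[x]_{s_i,Y}) \to (s_j,[x]_{s_j,Y})$ is a partial layer of $Y$. Consequently the $Y$-component $[x]_{\cdot,Y}$ is constant on $[s_i,s_j]$, and the maximal such edge --- the layer $(s,[x]_{s,Y}) \to (t,[x]_{t,Y})$ containing $L$ --- automatically satisfies $s \leq s_i$ and $t \geq s_j$.

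The heart of the matter is the two strict inequalities $s_{i-1} < s$ and $t < s_{j+1}$. Each of these I would obtain by showing that the $Y$-component genuinely changes just outside $[s_i,s_j]$, i.e. that
\begin{equation*}
  [x]_{s_{i-1},Y} \neq [x]_{s_i,Y} \qquad\text{and}\qquad [x]_{s_j,Y} \neq [x]_{s_{j+1},Y};
\end{equation*}
once these are in place, the interval of constancy of $[x]_{\cdot,Y}$ cannot contain $s_{i-1}$ together with $s_i$ (resp. $s_{j+1}$ together with $s_j$), which forces $s > s_{i-1}$ and $t < s_{j+1}$. To establish these two non-equalities I would transport the corresponding $X$-statements across the dictionary relating the components of $X$ and $Y$. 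Because $L$ is a \emph{layer} of $X$, its maximality gives the proper inclusions recorded by $[x]_{s_{i-1},X} \neq [x]_{s_i,X}$ and $[x]_{s_j,X} \neq [x]_{s_{j+1},X}$. When $y$ is absent from the relevant $Y$-component, Lemma \ref{lem 9} identifies that component with its $X$-counterpart and the strict $X$-inequality passes directly to $Y$; when $y$ is present, the connectedness in Lemma \ref{lem 11} gives $[x]_{\cdot,Y} = \{y\} \sqcup [x]_{\cdot,X}$, so again the $X$-inequality is reflected in $Y$, and if $y$ lies in the larger component but not the smaller one then $y$ itself already witnesses the inequality.

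The main obstacle, and the point requiring care, is the bookkeeping of the membership of $y$ as the distance parameter moves between consecutive phase change numbers, since the hypothesis controls $y$ only at $s_i$ and $s_j$: I would split each non-equality into the subcases ``$y$ in both components'', ``$y$ in the larger only'', and ``$y$ in neither'', and check that the comparison between $X$- and $Y$-components yields a strict difference in every subcase. Underlying all of this is the verification that Lemma \ref{lem 11} actually applies at each phase change number $s_m$ of $X$ with $m \geq 1$: taking the index $m$ in that lemma, its hypothesis $s_m \leq s_m < s_{m+1}-r$ is precisely the tininess condition (\ref{eq 12}), namely $r < s_{m+1}-s_m$. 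This is exactly where the smallness of $r$ is consumed, and it is what guarantees that $V_{s_m}(X)(y)$ is connected, so that the clean splitting $[x]_{s_m,Y} = \{y\} \sqcup [x]_{s_m,X}$ is available at every relevant parameter value.
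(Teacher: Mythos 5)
Your proposal is correct and is in essence the paper's own proof: the first stage (Corollaries 10 and 13 give the partial layer of $Y$) is identical, and your second stage is just the contrapositive of the paper's argument --- where the paper supposes a longer partial layer of $Y$ between phase change numbers $s' < s_{i}$, $t' > s_{j}$ of $X$ and pulls it back to a partial layer of $X$ (via Lemma 9 when $y$ is absent, and via the Lemma 11 splitting $[x]_{\cdot,Y} = \{y\} \sqcup [x]_{\cdot,X}$ when $y$ is present) to contradict the maximality of $L$, you push the same comparison forward to obtain $[x]_{s_{i-1},Y} \neq [x]_{s_{i},Y}$ and $[x]_{s_{j},Y} \neq [x]_{s_{j+1},Y}$ and bound the constancy interval directly. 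Same key lemmas, same appeal to maximality of $L$, same case analysis on the membership of $y$; only the direction of transfer between $X$ and $Y$ is reversed, so this counts as the same approach.
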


The numbers $s$ and $t$ in the statement of Proposition \ref{prop 15} are phase change numbers for $Y$. One can show that the phase change numbers for $Y$ lie in the open intervals $(s_{i}-r,s_{i}+r)$ centred at the phase change numbers $s_{i}$ for $X$. This is a constraint on the position of the numbers $s$ and $t$ above, but the differences $s_{i}-s_{i-1}$ and $s_{j+1}-s_{j}$ could be large, and there is no theoretical information, for example, on whether $s$ is close to $s_{i}$ or close to $s_{i-1}$.

\bibliographystyle{plain} 
\bibliography{spt}

\end{document}